\def\Rb{\mathbb{R}}		
\newcommand{\rk}{\mathit{rank }}
\newcommand{\pder}[2]{\frac{\partial #1}{\partial #2}}
\newcommand{\restr}[1]
   {\vrule height1ex width.4pt
depth1.4ex\lower1.4ex\hbox{\scriptsize $\,#1$}}
\newtheorem{theorem}{Theorem}
\newtheorem{proposition}{Proposition}
\newtheorem{lemma}{Lemma}
\theoremstyle{definition}
\newtheorem{definition}{Definition}
\newtheorem{rem}{Remark}
\newtheorem{example}{Example}
\author{Inês Cruz\thanks{Centro de Matem\'atica da Universidade do Porto, Departamento de Matem\'atica, Faculdade de Ci\^encias da Universidade do Porto, 4169-007 Porto, Portugal}\,  and M. Esmeralda Sousa-Dias\thanks{Departamento de Matemática, Center for Mathematical Analysis, Geometry, and Dynamical Systems (CAMGSD-LARSyS),  Instituto Superior Técnico, 1049-001 Lisboa, Portugal.}}
\begin{document}
\title{Reduction of cluster iteration maps to symplectic maps}
\maketitle

  \begin{abstract}
We study iteration maps of recurrence relations arising from mutation periodic quivers of arbitrary period. Combining tools from cluster algebra theory and (pre)symplectic geometry, we show that these cluster iteration maps can be reduced to symplectic maps on a lower dimensional submanifold, provided  the matrix representing the quiver is singular. The reduced iteration map is explicitly computed for several new periodic quivers. 
\end{abstract}
 
\medskip

\noindent {\it MSC 2010:} 53D20; 13F60, 37J10, 65Q30.\\
{\it Keywords:}  symplectic reduction, cluster algebras, symplectic maps, recurrence relations.

\section{Introduction} 

Recurrence relations arise in a natural way from periodic quivers via Fomin-Zelevinsky cluster mutations. We call  this type of relations  {\it cluster recurrence relations}. The iteration map of such recurrence relations is birrational and defined as  the composition of mutations and permutations.
In  \cite{FoMa} the  notion of mutation-periodicity of a quiver is used to show that 1-periodic quivers give rise to recurrence relations on the real line while  to higher periodic quivers correspond  recurrence relations on higher dimensional spaces.

Since the introduction of cluster algebras  by Sergey Fomin and Andrei Zelevinsky in \cite{FoZe}, the theory of cluster algebras has grown  in many research directions. Relevant to our work are the relations between cluster algebras and Poisson/symplectic geometry whose main achievements are surveyed in \cite{GeShVa3}. The presymplectic structures considered in the context of cluster algebra theory are known as log-canonical presymplectic structures. We will refer to these forms just as {\it log presymplectic forms}. 

Our starting motivation was to understand the relevance of  presym\-plectic and Poisson structures  compatible with a cluster algebra to the recurrence relations arising from such algebras.  Some steps in this direction were taken in \cite{FoHo} and \cite{FoHo2}, where the  integrability of some Somos-type sequences (associated to 1-periodic quivers) was obtained by reduction of the iteration map to a symplectic map. These references, based on the classification of 1-periodic quivers   obtained in \cite{FoMa}, prove that iteration maps arising from a 1-periodic quiver can be reduced to  symplectic maps.

Our main result, Theorem~\ref{reducth}, shows that any iteration map arising from a quiver of arbitrary period $m$ can be reduced to a symplectic map on a $2k$-dimensional space with respect to a log symplectic form, where $2k$ is the rank of the matrix representing the quiver. The proof of this theorem does not rely on the classification of  periodic quivers, which  is unknown for periods higher than 1.  The main ingredients of the proof are: (i) the invariance of the standard log presymplectic form under the iteration map, which is proved in Theorem~\ref{presomega}; (ii) a classical theorem of G. Darboux (or of E. Cartan for the linear version) for the reduction of an arbitrary presymplectic form to a symplectic form.

To illustrate the main result, we explicitly compute the  reduced symplectic iteration map of  some cluster recurrence relations. We would like to point out  the relevance of the constructive proof of E. Cartan's theorem to the computations carried out. In fact, this construction  provides explicit  Darboux coordinates necessary to the computation of the reduced iteration map.

We note that iteration maps are not intrinsic objects of study in cluster algebra theory since they only appear when one considers cluster algebras associated to periodic quivers. Several   results presented here  provide  new insights into the role of some cluster algebras structures,  in particular of the so-called secondary cluster manifold. In fact,   Theorem~\ref{reducth} can be interpreted as reduction of an extra structure - the iteration map - to the secondary cluster manifold with its Weil-Petersson form (see  \cite{GeShVa2}, \cite{GeShVa} and \cite{GeShVa3}). 

The organization of the paper is as follows. In Section~\ref{sec2} we introduce the basic notions of the theory of cluster algebras necessary to subsequent sections, in particular, the definition of periodic quivers and the construction of cluster recurrence relations from  periodic quivers.  The next section is devoted to the proof of Theorem~\ref{presomega} which shows the invariance, under the iteration map, of  the log presymplectic form   whose coefficient matrix is (up to a constant) the matrix representing the periodic quiver.  In Section~\ref{sec4}, we prove the main result, Theorem~\ref{reducth}, on the reduction of cluster iteration maps to symplectic maps. The paper ends   with  examples illustrating the previous results.

\section{Mu\-ta\-tion-periodic quivers and cluster iteration maps}\label{sec2}

Here we introduce the notions of the theory of cluster algebras  necessary to the following  sections. We will work in the context of coefficient free cluster algebras ${\cal A}(B)$ where $B$ is a (finite) skew-symmetric integer matrix. 

In this work {\it quiver} means an oriented graph with $N$ nodes and no loops nor 2-cycles. It will be represented by an $N$-sided polygon whose vertices are the nodes of the quiver and will be labelled by  $1,2,\ldots, N$ in clockwise direction.  To each  oriented edge of the polygon one associates a weight which is the positive integer representing the number of arrows between the corresponding nodes of the quiver. A quiver can also be identified with a skew-symmetric  matrix $B=[b_{ij}]$, being  $b_{ij}$  the number of arrows from node $i$ to node $j$ minus the number of arrows from $j$ to $i$. We denote by $B_Q$ the $N\times N$ skew-symmetric matrix  representing a quiver $Q$ with $N$ nodes.

To each node $i$ of a quiver $Q$ one attaches a variable $u_i$  called  {\it cluster variable}.  The pair $(B_Q,\mathbf{u})$ is called the {\it initial seed} and $\mathbf{u}=(u_1,\ldots,u_N)$  the {\it initial cluster}. 

The basic operation of the theory of cluster algebras  is called a {\it mutation}.  A mutation $\mu_k$ in {\it the direction} of $k$ (or at node $k$) acts  on a given seed $(B,\mathbf{u})$ with $B=[b_{ij}]$ and $\mathbf{u}=(u_1, \ldots, u_N)$, as follows:
\begin{itemize}
\item $\mu_k(B)=\left[b'_{ij}\right]$ with
\begin{equation}\label{mut k2}
b'_{ij}=\begin{cases}
-b_{ij},&\text{$ (k-i)(j-k)=0$}\\
b_{ij}+\frac{1}{2} \left(|b_{ik}|b_{kj}+ b_{ik}|b_{kj}|\right),&\text{otherwise}.
\end{cases}
\end{equation}
\item $\mu_k(u_1, \ldots, u_N)=  (u'_1,\ldots, u'_N)$, with
\begin{equation}\label{mut k1}
u'_i=\begin{cases}
u_i& i\neq k\\
\displaystyle{\frac{\prod_{j: b_{kj}>0}u_j^{b_{kj}}+\prod_{j: b_{kj}<0}u_j^{-b_{kj}}}{u_k},}&i=k.
\end{cases}
\end{equation}
When  one of the products in \eqref{mut k1} is taken over an empty set, its value is assumed to be 1.
\end{itemize}

 Formulae  \eqref{mut k2} and \eqref{mut k1} are called (cluster) {\it exchange relations}. It is easy to see that the exchange relations have the properties:  (i) if $B$ is skew-symmetric, then  $B'=\mu_k(B)$ is again skew-symmetric; (ii) $\mu_k$ is an involution, that is $\mu_k\circ\mu_k=Id$.

Given a matrix $B$ and an initial cluster $\mathbf{u}$ we can apply a mutation $\mu_k$ to produce another cluster, and then apply another mutation $\mu_p$ to this cluster to produce another cluster and so on.  The {\it cluster algebra} (of geometric type) ${\cal A}(B)$ is the subalgebra of the field of rational functions in the cluster variables, generated by the union of all clusters.

\bigbreak

The  notion of  {\it mutation periodicity} of a quiver introduced in \cite{FoMa} enables  to associate a recurrence relation to a periodic quiver.  More precisely,  to an $m$-periodic quiver one associates a recurrence relation on an $m$-dimensional space. Let us recall the definition of periodic quiver. 

Consider the permutation $\sigma: ( 1, 2, \ldots, N ) \longmapsto  (2,3,\ldots, N,1)$ and 
 $\sigma Q$  the quiver in which the number of arrows from node $\sigma(i)$ to node $\sigma(j)$ is the number of arrows in $Q$ from node $i$ to node $j$. Equivalently,  the action of $\sigma$ on the polygon representing $Q$ leaves the weighted  edges  fixed and moves the vertices in the counterclockwise direction.
 
  If $B_Q$ is the skew-symmetric matrix representing the quiver $Q$, the action $Q\mapsto \sigma Q$ corresponds to the conjugation $B_Q\mapsto \sigma^{-1} B_Q\, \sigma $, that is,
$$B_{\sigma Q} = \sigma^{-1} B_Q\, \sigma,$$ 
where, slightly abusing notation, $\sigma$ also denotes the matrix representing the permutation, that is
\begin{equation}
\label{sigma}
\sigma = \begin{bmatrix}
0&1&0&\cdots&0\\
0&0&1&&\vdots\\
\vdots&&\ddots&\ddots&\vdots\\
&&&&1\\
1&0&0&\cdots&0
\end{bmatrix} .
\end{equation}
 
 \begin{definition} Let $Q$ be a quiver with $N$ nodes, $B_Q$ the skew-symmetric matrix representing $Q$  and  
$$\sigma: ( 1, 2, \ldots, N ) \longmapsto  (2,3,\ldots, N,1).$$
$Q$ is said to have {\it period} $m$ if   $m$  is the smallest positive integer such that
\begin{equation}\label{matrixper}
\mu_m\circ\cdots\circ\mu_1(B_Q)=\sigma^{-m} B_Q\, \sigma^m.
\end{equation}
\end{definition}

\bigbreak

The  notion of mutation periodicity means that if a quiver is  $m$-periodic, then after applying $\mu_m\circ\cdots\circ\mu_1$  we return to a quiver which is equivalent (up to a certain permutation) to the original quiver, and so mutating this quiver at node $m+1$ produces an exchange relation identical in  form to the exchange relation at node 1 but with a different labeling. The next mutation at node $m+2$ will produce an exchange relation whose form is identical to the exchange relation at node 2 with a different labeling, and so on. This process produces a list of exchange relations, which is interpreted as a recurrence relation.

We note that the definition of a periodic quiver could also be stated in terms of the action of a mutation on a quiver which is defined by specifying a set of rules for mutating the arrows  of the quiver (see for instance \cite{FoMa} and \cite{Ke}).

\bigbreak

We now  explain with a running example how to construct the  recurrence relation (and the  respective iteration map)  corresponding  to a periodic quiver. 

\begin{example}
Consider an initial cluster $\mathbf{u}=(u_1,u_2,u_3,u_4,u_5,u_6)$
and the quiver with 6 nodes represented by the matrix 

\begin{equation}\label{matrix62per}
B=\begin{bmatrix}
0&-r&s&-p&s&-t\\
r&0& -t-rs&s&-p-r s&s\\
-s&  t+rs&0& -r-s(t-p)&s&-p\\
p&-s& r+s(t-p)&0& -t-r s&s\\
-s&p+r s&-s& t+r s&0&-r\\
t&-s&p&-s&r&0
\end{bmatrix}
\end{equation}
where $r,s,t,p$ are positive integers.
Using \eqref{mut k2} and \eqref{sigma} it is easy to check that  one has  $\sigma^{-1} B\sigma=\mu_1(B)$ if $r=t$ and  $\sigma^{-2} B\sigma^2=\mu_2\circ \mu_1(B)$ if $r\neq t$. That is, the quiver is 1-periodic when $r= t$ and 2-periodic otherwise.

\begin{itemize}
\item[1.] {\bf Case $r=t$}

According to \eqref{mut k1}, the mutation at node 1 produces the cluster $\mu_1(\mathbf{u})= (u_7,u_2,u_3,u_4,u_5,u_6)$  with
\begin{equation}\label{1p6nodes}
u_7 u_1=u_2^ru_4^pu_6^r+u_3^su_5^s,
\end{equation}
 and $u_7=u'_1$. The exponents of the right hand side monomials were read directly from the first row of $B$. As the quiver is 1-periodic, mutating now at node 2 produces an exchange relation which is a \emph{shift by 1 of the relation} \eqref{1p6nodes}. Indeed, $\mu_2\circ\mu_1(\mathbf{u}) = (u_7,u_8,u_3,u_4,u_5,u_6)$ with 
$$u_8u_2= u_3^ru_5^pu_7^r+u_4^su_6^s,
$$
where the exponents of the right hand side monomials can now be read from the second row of $\mu_1(B)$ which is $(-r,0,-r,s,-p,s)$. Mutating successively at consecutive nodes we obtain the following sixth order  cluster recurrence relation on the real line:
\begin{equation}\label{rec6nodes1p}
u_{n+6} u_n= u_{n+1}^ru_{n+3}^ru_{n+5}^p+u_{n+2}^su_{n+4}^s, \quad n=1,2,\ldots
\end{equation}
\item[2.] {\bf Case $r\neq t$}

The quiver represented by $B$ is now 2-periodic. Mutating the initial cluster at node 1 produces the cluster $\mu_1(\mathbf{u})= (u_7,u_2,u_3,u_4,u_5,u_6)$ with 
\begin{equation}\label{2p6nodes1}
u_7 u_1=u_2^ru_4^pu_6^t+u_3^su_5^s.
\end{equation}
The second row of $\mu_1(B)$ is now $(-r,0,-t,s,-p,s)$, and so the cluster $\mu_2\circ\mu_1(\mathbf{u})= (u_7,u_8,u_3,u_4,u_5,u_6)$ satisfies the relation
\begin{equation}\label{2p6nodes2}
u_8u_2= u_3^tu_5^pu_7^r+u_4^su_6^s,
\end{equation}
with $u_7$ given by \eqref{2p6nodes1}.
Unlike the previous case, the exchange relation \eqref{2p6nodes2} is not  a shift of the relation \eqref{2p6nodes1}. However, as the quiver is 2-periodic, mutating at node 3 gives an exchange relation which is a shift by 2 of  \eqref{2p6nodes1} and the next mutation at node 4 gives an exchange relation which is a shift by 2 of  \eqref{2p6nodes2}. That is, the 2-periodic quiver gives rise to the following third order   cluster recurrence relation  on the plane:
\begin{equation}\label{rec6nodes}
\left\{\begin{array}{ll}
x_{n+3}x_n&=y_{n}^ry_{n+1}^py_{n+2}^t+x_{n+1}^sx_{n+2}^s\\
&\\
y_{n+3}y_n&=x_{n+1}^t x_{n+2}^px_{n+3}^r+y_{n+1}^sy_{n+2}^s,
\end{array}n=1,2,\ldots
\right.
\end{equation}
where $x_n=u_{2n-1}$ and $y_n=u_{2n}$.
\end{itemize} 

\end{example}

\begin{rem} The form of the matrices representing  1-periodic quivers  was obtained  in \cite{FoMa}, however the classification of quivers of higher period is still unknown.  To the best of our knowledge the  2-periodic quiver  with 6 nodes represented by the matrix $B$ in \eqref{matrix62per}  is new.
\end{rem}
From the construction of the recurrence relation associated to an $m$-periodic quiver, it is easy to see that the corresponding {\it cluster iteration map} is given by
\begin{equation}\label{itmapm}
\varphi= \sigma^m\circ\mu_m\circ\cdots\circ\mu_2\circ\mu_1.
\end{equation}
In particular, the iteration maps for the recurrence relations \eqref{rec6nodes1p} and \eqref{rec6nodes} are, respectively, 
\begin{align}\label{itmap4nos2pernova1}
\varphi(u_1,u_2,u_3,u_4,u_5,u_6)&=\left ( u_2,u_3,u_4,u_5,u_6,\underbrace{\frac{u_2^ru_4^pu_6^r+u_3^su_5^s}{u_1}}_{u_7} \right ),\\ \label{itmap4nos2pernova2}
\varphi(u_1,u_2,u_3,u_4,u_5,u_6)&=\left ( u_3,u_4,u_5,u_6,\underbrace{\frac{u_2^ru_4^pu_6^t+u_3^su_5^s}{u_1}}_{u_7},\underbrace{\frac{u_3^tu_5^pu_7^r+u_4^su_6^s}{u_2}}_{u_8}\right ).
\end{align}

\section{Log presymplectic forms and  periodic quivers}\label{sec3}

 Presymplectic structures (and quadratic Poisson structures) associated to a cluster algebra ${\cal A}(B)$ were introduced in   \cite{GeShVa2} and \cite{GeShVa}. The  presymplectic forms considered in the context of cluster algebras are  of the type $\omega=\sum_{i<j} w_{ij} \frac{du_i}{u_i}\wedge \frac{du_j}{u_j}$. We will call these forms  {\it log presymplectic  forms} and the functions $u_i$ {\it log coordinates} with respect to $\omega$. Such   log presymplectic form is said to be compatible with a cluster algebra ${\cal A}(B)$ if all the clusters in ${\cal A}(B)$ give log coordinates with respect to $\omega$,  with eventually different coefficients $\omega'_{ij}$. It was shown in  \cite{GeShVa} that any cluster algebra carries a compatible log presymplectic structure, and whenever the matrix $W=[w_{ij} ]$ has not full rank  there exits a rational (symplectic) manifold of dimension $2k=\operatorname{rank} W$ called the secondary cluster manifold. The symplectic form on this manifold is called the Weil-Petersson form associated to the cluster algebra ${\cal A}(B)$. The reason for this name is its relation with the Weil-Petersson form on a Teichm\"uller space (see  \cite{GeShVa} or \cite{GeShVa3} for details).

 Our main aim is to understand  the relevance of these log presymplectic structures to the recurrence relations arising from $m$-periodic quivers. As we will show in the next section, if the matrix representing the quiver is singular the recurrence's iteration map can be reduced to a symplectic map with respect to a log symplectic form.
 The key property behind this symplectic reduction is precisely the invariance of the standard log presymplectic structure \eqref{ssf} under the iteration  map.

 \bigbreak

If  $(B,\mathbf{u})$ is  the initial seed with  $B=[b_{ij}]$, we call the 2-form 
\begin{equation}
\label{ssf}
\omega = \sum_{1\leq i<j\leq N} b_{ij} \frac{du_i}{u_i}\wedge \frac{du_j}{u_j},
\end{equation}
the {\it standard log presymplectic form} associated to the cluster algebra ${\cal A}(B)$. The skew-symmetric matrix $B$ will be called the {\it coefficient matrix of $\omega$}.

The 2-form $\omega$ is known in the literature as log-canonical presymplectic form, since in the coordinates $v_i= \log u_i$  it has the following {\it canonical} form:
$$\omega = \sum_{i<j} b_{ij} dv_i\wedge dv_j.$$

Although we do not use explicitly  the  notion of compatibility of  a presymplectic form with a cluster algebra, we remark that  the standard log presymplectic form \eqref{ssf} is in fact compatible with the cluster algebra ${\cal A}(B)$ (see for instance Theorem 6.2 in  \cite{GeShVa3} which characterizes  such compatible forms). 

\begin{theorem}
\label{presomega}
Let $(B,\mathbf{u})$ be an initial seed,  and  $\omega$ the standard log presymplectic form \eqref{ssf} associated to ${\cal A}(B)$. Then, the following are equivalent:
\begin{enumerate}
\item The matrix $B$ represents  an m-periodic quiver, that is
$$\mu_m \circ \mu_{m-1}\circ \cdots \circ \mu_1 (B) = \sigma^{-m} B\sigma^m.$$
\item  $\varphi^* \omega=\omega$, where $\varphi = \sigma^m \circ \mu_m \circ \mu_{m-1}\circ \cdots \circ \mu_1$.
\end{enumerate}
\end{theorem}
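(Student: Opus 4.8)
The plan is to reduce the statement to two elementary facts about how $\omega$ transforms under the two ingredients composing $\varphi$: a single mutation and the cyclic permutation $\sigma$. Throughout, for a skew-symmetric matrix $C=[c_{ij}]$ I write $\omega_C=\sum_{i<j}c_{ij}\frac{du_i}{u_i}\wedge\frac{du_j}{u_j}$, so $\omega=\omega_B$, and I set $B_0=B$, $B_l=\mu_l(B_{l-1})$ for $l=1,\dots,m$, and $\psi=\mu_m\circ\cdots\circ\mu_1$, so that $\varphi=\sigma^m\circ\psi$ and $B_m=\mu_m\circ\cdots\circ\mu_1(B)$. The assignment $C\mapsto\omega_C$ will serve as a bookkeeping device, and I will use two trivial observations: it is injective (the $c_{ij}$ are read off $\omega_C$), and $\psi$, being a composition of birational mutation maps, induces an injective pullback $\psi^*$ on rational $2$-forms.

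The crux --- and the step I expect to be the main obstacle --- is the covariance of the log form under a single mutation, namely $\mu_k^*\,\omega_{\mu_k(B)}=\omega_B$. First I would establish this by direct computation. Only $u_k$ changes, with $u_k u_k'=M_++M_-$ where $M_+=\prod_{j:b_{kj}>0}u_j^{b_{kj}}$ and $M_-=\prod_{j:b_{kj}<0}u_j^{-b_{kj}}$, so that $\frac{du_k'}{u_k'}=-\frac{du_k}{u_k}+\frac{dM_++dM_-}{M_++M_-}$. Pulling back $\omega_{\mu_k(B)}$ and splitting off the terms containing $u_k'$, the $-\frac{du_k}{u_k}$ piece (together with $b'_{kj}=-b_{kj}$) reproduces exactly the $k$-th row and column of $\omega_B$. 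The remaining contribution carries the non-monomial weight $\lambda=M_+/(M_++M_-)$; writing $d\log M_+-d\log M_-=\sum_l b_{kl}\frac{du_l}{u_l}$, the $\lambda$-dependent part equals $\lambda\sum_{j,l}(-b_{kj})b_{kl}\,\frac{du_l}{u_l}\wedge\frac{du_j}{u_j}$, whose coefficient $b_{kj}b_{kl}$ is symmetric in $(j,l)$ and hence wedges to zero. What survives is a constant-coefficient form in which the $d\log M_-$ contribution cancels the mutation correction $\tfrac12(|b_{ik}|b_{kj}+b_{ik}|b_{kj}|)$ inside $b'_{ij}$, leaving exactly the coefficients $b_{ij}$ of $\omega_B$. (This is in essence the compatibility of the standard log form with the cluster structure of \cite{GeShVa}.) I flag this as the obstacle precisely because of the cancellation of $\lambda$: it is the only place where the algebra could fail, and it is forced by the antisymmetry of $\frac{du_l}{u_l}\wedge\frac{du_j}{u_j}$ against a symmetric coefficient.

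Composing the single-mutation identity along the chain $B_0,B_1,\dots,B_m$ gives, with no hypothesis on $B$,
$$\psi^*\omega_{B_m}=\mu_1^*\mu_2^*\cdots\mu_m^*\,\omega_{B_m}=\omega_{B_0}=\omega_B,$$
since $\mu_l^*\omega_{B_l}=\omega_{B_{l-1}}$ at each step. Next I would compute the permutation action by a pure reindexing: since $\sigma^*u_i=u_{i+1}$ cyclically, relabelling the summation indices in $\sigma^*\omega_B$ and comparing with the entrywise identity $(\sigma^{-1}B\sigma)_{ij}=b_{i-1,\,j-1}$ yields $(\sigma^m)^*\omega_B=\omega_{\sigma^{-m}B\sigma^m}$. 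Hence, again with no hypothesis,
$$\varphi^*\omega=\psi^*(\sigma^m)^*\omega_B=\psi^*\omega_{\sigma^{-m}B\sigma^m}.$$

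Finally I would assemble the equivalence. If $B$ is $m$-periodic then $\sigma^{-m}B\sigma^m=B_m$, so $\varphi^*\omega=\psi^*\omega_{B_m}=\omega_B=\omega$, giving (1)$\Rightarrow$(2). Conversely, if $\varphi^*\omega=\omega$, then $\psi^*\omega_{\sigma^{-m}B\sigma^m}=\omega_B=\psi^*\omega_{B_m}$; injectivity of $\psi^*$ yields $\omega_{\sigma^{-m}B\sigma^m}=\omega_{B_m}$, and injectivity of $C\mapsto\omega_C$ yields $\sigma^{-m}B\sigma^m=B_m=\mu_m\circ\cdots\circ\mu_1(B)$, which is (1). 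Both implications thus collapse onto the single-mutation covariance lemma together with these two elementary injectivity remarks, so once that lemma is in hand the theorem follows formally.
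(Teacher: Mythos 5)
Your proof is correct, and it rests on the same two ingredients the paper isolates as Lemmas~\ref{lem1} and~\ref{lem2}: how $\omega$ transforms under $\sigma$ and under a single exchange map. The assembly, however, is genuinely different, and in one respect tighter. The paper's Lemma~\ref{lem2} is the \emph{forward} identity $\mu_k^*\omega_B=\omega_{\mu_k(B)}$, where the $2$-form and the exchange map are built from the \emph{same} matrix; the paper then chains pullbacks starting from $(\sigma^m)^*\omega=\omega_{\sigma^{-m}B\sigma^m}$ to assert, for an arbitrary seed, that $\varphi^*\omega=\omega_{\mu_1(\cdots\mu_m(\sigma^{-m}B\sigma^m))}$, and closes with the involution property of matrix mutation. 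In that chain each $\mu_k^*$ is the pullback by the exchange map determined by $B_{k-1}=\mu_{k-1}\circ\cdots\circ\mu_1(B)$, while the form it is applied to has coefficient matrix taken from the chain descending from $\sigma^{-m}B\sigma^m$; these need not define the same exchange relation at node $k$, and when the relevant rows are not proportional the pullback does not even have constant coefficients, so the paper's intermediate formula tacitly requires a matching argument (which does hold under hypothesis (1), but is not available when proving (2)$\Rightarrow$(1)). Your bookkeeping avoids this issue entirely: your \emph{inverse} lemma $\mu_k^*\omega_{\mu_k(B)}=\omega_B$ --- equivalent to the paper's, since the exchange map is an involution and is unchanged by flipping the sign of row $k$ --- is always applied with the form's matrix equal to the mutation of the map's defining matrix, so $\psi^*\omega_{B_m}=\omega_B$ (with your $\psi=\mu_m\circ\cdots\circ\mu_1$, $B_m=\mu_m\circ\cdots\circ\mu_1(B)$) holds unconditionally, step by step. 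The forward implication is then a substitution, and the converse follows from injectivity of $\psi^*$ (each exchange map is a smooth involution, hence a diffeomorphism of $\Rb_+^N$, hence so is $\psi$) together with injectivity of $C\mapsto\omega_C$, in place of the paper's matrix-involution step. What each route buys: the paper's gives a closed formula for $\varphi^*\omega$ for every seed (modulo the matching caveat), while yours evaluates $\varphi^*\omega$ only against the two candidate coefficient matrices, which is exactly why it stays rigorous with no extra work. Finally, your organization of the single-mutation computation --- the non-monomial weight $\lambda$ killed because a symmetric coefficient $b_{kj}b_{kl}$ wedges against an antisymmetric form, with the $d\log M_-$ term absorbing the mutation correction --- is a cleaner packaging of the paper's four-case verification, and it is correct as sketched.
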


The proof of this theorem relies on the following two lemmas.

\begin{lemma}
\label{lem1}
Let $(B,\mathbf{u})$ be an initial seed, $\omega$ the standard log presymplectic form \eqref{ssf} and $\sigma$  the permutation (\ref{sigma}). Then, the \emph{pullback of $\omega$ by $\sigma$} is given by
\begin{equation}\label{pullsigma}
\sigma^* \omega = \sum_{i<j} \left ( \sigma^{-1} B\sigma \right )_{ij} \frac{du_i}{u_i}\wedge \frac{du_j}{u_j}.
\end{equation}
\end{lemma}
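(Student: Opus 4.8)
The plan is to compute $\sigma^*\omega$ directly from the definitions, reducing everything to the elementary observation that $\sigma$ acts on the logarithmic one-forms merely by permuting their indices. Write $\eta_i=\frac{du_i}{u_i}$, so that $\omega=\sum_{i<j}b_{ij}\,\eta_i\wedge\eta_j$.

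First I would pin down the action of $\sigma$ as a coordinate map. Reading off the $1$-periodic iteration map \eqref{itmap4nos2pernova1}, the relabeling $\sigma$ sends $(u_1,\ldots,u_N)$ to $(u_2,\ldots,u_N,u_1)$, so its $i$-th component is $u_{\sigma(i)}$ with $\sigma(i)=i+1$ (indices mod $N$). Consequently $\sigma^*u_i=u_i\circ\sigma=u_{\sigma(i)}$, and since pullback commutes with $d$ this gives at once $\sigma^*\eta_i=\eta_{\sigma(i)}$ and hence $\sigma^*(\eta_i\wedge\eta_j)=\eta_{\sigma(i)}\wedge\eta_{\sigma(j)}$.

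Next I would extend the coefficients $b_{ij}$ to a full skew-symmetric array and write $\omega=\tfrac12\sum_{i,j}b_{ij}\,\eta_i\wedge\eta_j$, so that $\sigma^*\omega=\tfrac12\sum_{i,j}b_{ij}\,\eta_{\sigma(i)}\wedge\eta_{\sigma(j)}$. Reindexing the double sum by the bijection $(i,j)\mapsto(\sigma(i),\sigma(j))=:(k,l)$, the coefficient of $\eta_k\wedge\eta_l$ becomes $b_{\sigma^{-1}(k),\sigma^{-1}(l)}$. The final step is to identify this with a matrix entry of the conjugate: using the explicit form of $\sigma$ in \eqref{sigma} one checks $\sigma_{ql}=\delta_{l,\sigma(q)}$ and $(\sigma^{-1})_{kp}=\delta_{p,\sigma^{-1}(k)}$, whence expanding $(\sigma^{-1}B\sigma)_{kl}=\sum_{p,q}(\sigma^{-1})_{kp}b_{pq}\sigma_{ql}=b_{\sigma^{-1}(k),\sigma^{-1}(l)}$. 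Since $\sigma^{-1}B\sigma$ is again skew-symmetric, folding the full sum back to $k<l$ yields precisely \eqref{pullsigma}.

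The computation is entirely routine; the only genuine pitfall is bookkeeping with the permutation conventions — in particular ensuring that the conjugation which emerges is $\sigma^{-1}B\sigma$, consistent with the relation $B_{\sigma Q}=\sigma^{-1}B_Q\,\sigma$ recorded earlier, rather than $\sigma B\sigma^{-1}$. I would guard against this by never manipulating the matrix $\sigma$ abstractly, but instead tracking the index substitution $k=\sigma(i)$, $l=\sigma(j)$ explicitly and only at the very end matching the resulting coefficient $b_{\sigma^{-1}(k),\sigma^{-1}(l)}$ against a direct expansion of $(\sigma^{-1}B\sigma)_{kl}$.
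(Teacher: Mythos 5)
Your proof is correct and follows the paper's overall strategy---compute $\sigma^*\omega$ directly from the coordinate action $\sigma(u_1,\ldots,u_N)=(u_2,\ldots,u_N,u_1)$---but the final identification is handled by a genuinely different device. The paper substitutes, reads off the coefficient matrix $\hat{B}$ of $\sigma^*\omega$ entry by entry as an explicit $N\times N$ array, and then checks the matrix identity $\sigma^T B=\hat{B}\sigma^T$, concluding $\hat{B}=\sigma^{-1}B\sigma$ from orthogonality of $\sigma$. You instead extend $b_{ij}$ to a skew-symmetric array, write $\omega=\tfrac{1}{2}\sum_{i,j}b_{ij}\,\frac{du_i}{u_i}\wedge\frac{du_j}{u_j}$, reindex the double sum by $(k,l)=(\sigma(i),\sigma(j))$, and verify $(\sigma^{-1}B\sigma)_{kl}=b_{\sigma^{-1}(k),\sigma^{-1}(l)}$ by expanding with Kronecker deltas, folding back to $k<l$ at the end via skew-symmetry of the conjugated matrix. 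Your route avoids displaying the permuted matrix (the most error-prone part of the paper's proof, and the part it leaves as ``a straightforward computation''), and it uses only that $\sigma$ is a permutation matrix, so the same argument proves the lemma verbatim for an arbitrary permutation rather than just the cyclic shift; what the paper's version buys is concreteness, since the explicit $\hat{B}$ makes the cyclic structure visible and checkable on small examples. One cosmetic point: you justify the coordinate action of $\sigma$ by ``reading it off'' the iteration map \eqref{itmap4nos2pernova1}; it would be cleaner to take $\sigma^*u_i=u_{\sigma(i)}$ as the definition of the map induced by the relabeling, which is exactly how the paper's proof opens.
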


\begin{proof} As $\sigma(u_1, u_2,\ldots, u_N) = (u_2, \dots, u_N, u_1)$, the pullback of $\omega$ by $\sigma$ is given by 
\begin{eqnarray}
\sigma^* \omega & = & \sum_{1\leq i\leq N-1} b_{iN} \frac{du_{i+1}}{u_{i+1}}\wedge \frac{du_{1}}{u_{1}} + \sum_{1\leq i<j\leq N-1} b_{ij} \frac{du_{i+1}}{u_{i+1}}\wedge \frac{du_{j+1}}{u_{j+1}}  \nonumber \\ 
& = & -\sum_{2\leq k\leq N} b_{k-1,N} \frac{du_1}{u_1}\wedge \frac{du_k}{u_k} + \sum_{2\leq k<l\leq N} b_{k-1,l-1} \frac{du_k}{u_k}\wedge \frac{du_l}{u_l}.
\end{eqnarray}
Therefore the coefficient matrix of $\sigma^* \omega$ is
\begin{equation}
\label{b'}
\hat{B}=\begin{bmatrix}
0 & -b_{1,N} & -b_{2,N} & \cdots & -b_{N-1,N}\\
b_{1,N} & 0 & b_{1,2} & \cdots & b_{1,N-1}\\
\vdots & \vdots & \vdots & \vdots & \vdots \\
b_{N-2,N} & -b_{1,N-2} & -b_{2,N-2} & \cdots & b_{N-2,N-1}\\
b_{N-1,N} & -b_{1,N-1} & -b_{2,N-2} & \cdots & 0
\end{bmatrix}.
\end{equation}
A straightforward computation shows that $\sigma^T B = \hat{B} \sigma^T$. As $\sigma$ is orthogonal, this is equivalent to $\sigma^{-1}B\sigma = \hat{B}$, which concludes the proof.
\end{proof}

\begin{lemma}
\label{lem2}
 Let $(B,\mathbf{u})$ be an initial seed, $\omega$ the standard log presymplectic form \eqref{ssf} and $\mu_k$ the mutation in the direction $k$ given by \eqref{mut k2} and \eqref{mut k1}. Then, the \emph{pullback of $\omega$ by $\mu_k$} is given by
\begin{equation}\label{pullmuk}
\mu_k^* \omega = \sum_{i<j} \left ( \mu_k (B) \right )_{ij} \frac{du_i}{u_i}\wedge \frac{du_j}{u_j}.
\end{equation}

\end{lemma}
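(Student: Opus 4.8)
The plan is to exploit the fact that the mutation $\mu_k$ alters only the single variable $u_k$, leaving $u_i=u_i'$ for every $i\neq k$. Writing $\eta_i=\frac{du_i}{u_i}$, this means $\mu_k^*\eta_i=\eta_i$ whenever $i\neq k$, so the entire effect of $\mu_k$ on $\omega$ is carried by the one-form $\mu_k^*\eta_k$. First I would isolate the terms of $\omega$ containing the index $k$, using skew-symmetry of $B$ to combine the $i=k$ and $j=k$ contributions:
\[
\omega=\omega_0+\eta_k\wedge\lambda,\qquad \omega_0=\sum_{\substack{i<j\\ i,j\neq k}}b_{ij}\,\eta_i\wedge\eta_j,\qquad \lambda=\sum_{j\neq k}b_{kj}\,\eta_j.
\]
Since $\omega_0$ and $\lambda$ involve only the invariant differentials, $\mu_k^*\omega=\omega_0+(\mu_k^*\eta_k)\wedge\lambda$, and the whole problem reduces to evaluating $(\mu_k^*\eta_k)\wedge\lambda$.

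Next I would compute $\mu_k^*\eta_k$ directly from the exchange relation \eqref{mut k1}. Writing the two monomials as $P_+=\prod_{b_{kj}>0}u_j^{b_{kj}}$ and $P_-=\prod_{b_{kj}<0}u_j^{-b_{kj}}$, so that $u_k'=(P_++P_-)/u_k$, logarithmic differentiation gives $\mu_k^*\eta_k=\alpha\,d\log P_+ + \beta\,d\log P_- - \eta_k$, where $\alpha=P_+/(P_++P_-)$ and $\beta=P_-/(P_++P_-)$. The crucial observation is that $d\log P_+=\sum_{b_{kj}>0}b_{kj}\eta_j$ and $d\log P_-=-\sum_{b_{kj}<0}b_{kj}\eta_j$ are $\Z$-linear in the $\eta_j$, and that $\lambda=d\log P_+-d\log P_-$ by construction. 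Setting $A=d\log P_+$, $C=d\log P_-$ and using $\alpha+\beta=1$ together with $A\wedge A=C\wedge C=0$, the rational coefficients cancel against $\lambda=A-C$:
\[
(\alpha A+\beta C)\wedge(A-C)=-(\alpha+\beta)\,A\wedge C=-A\wedge C.
\]
Hence $(\mu_k^*\eta_k)\wedge\lambda=-\,d\log P_+\wedge d\log P_- -\eta_k\wedge\lambda$, a form with constant integer coefficients.

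It then remains to match this with $\mu_k(B)$ from \eqref{mut k2}. The terms of $\mu_k(B)$ lying in row or column $k$ are simply negated (first branch of \eqref{mut k2}), which reproduces the $-\eta_k\wedge\lambda$ piece already obtained. For the off-$k$ block I would prove the purely algebraic identity
\[
-\,d\log P_+\wedge d\log P_- = \tfrac12\sum_{\substack{i<j\\ i,j\neq k}}\bigl(|b_{ik}|b_{kj}+b_{ik}|b_{kj}|\bigr)\,\eta_i\wedge\eta_j,
\]
by expanding the left-hand double sum and comparing, for each pair $i<j$, the coefficient of $\eta_i\wedge\eta_j$ with $\tfrac12(|b_{ik}|b_{kj}+b_{ik}|b_{kj}|)$ across the four sign cases of $(b_{ik},b_{kj})$; in each case the two agree, both vanishing exactly when $b_{ik}$ and $b_{kj}$ share a sign. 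Assembling $\omega_0-\eta_k\wedge\lambda-d\log P_+\wedge d\log P_-$ then gives $\sum_{i<j}(\mu_k(B))_{ij}\eta_i\wedge\eta_j$, as claimed.

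I expect the main obstacle to be the cancellation of the non-logarithmic factors $P_\pm/(P_++P_-)$. A priori $\mu_k^*\eta_k$ is a genuinely rational one-form rather than a $\Z$-combination of the $\eta_j$, so it is not evident that $\mu_k^*\omega$ should again be a log presymplectic form with integer coefficients. The wedge identity above is exactly what dissolves this difficulty, and it works only because $\lambda=d\log P_+-d\log P_-$, that is, because the same data $\{b_{kj}\}$ govern both the exchange monomials in \eqref{mut k1} and the $k$-row coefficients of $\omega$. The remaining sign bookkeeping in the coefficient-matching step is routine but should be carried out case by case to avoid errors.
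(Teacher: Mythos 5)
Your proof is correct, and although it shares its computational core with the paper's argument --- both begin by logarithmically differentiating the exchange relation to get $\mu_k^*\frac{du_k}{u_k} = \alpha\, d\log P_+ + \beta\, d\log P_- - \frac{du_k}{u_k}$ with $\alpha+\beta=1$ --- the way you organize the cancellation is genuinely different. The paper substitutes this one-form into the pulled-back $\omega$ and then matches the resulting coefficient matrix against $\mu_k(B)$ entry by entry, running a double case analysis: four position cases ($i<j<k$, $i<k<j$, $k<i<j$, and $i=k$ or $j=k$), each split into four sign cases of $(b_{ki},b_{kj})$, with the rational factors $A^\pm/(A^++A^-)$ collapsing to integers separately in every case. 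You instead isolate the entire cancellation in a single structural identity: writing $\omega=\omega_0+\eta_k\wedge\lambda$ with $\lambda=d\log P_+-d\log P_-=A-C$, the computation $(\alpha A+\beta C)\wedge(A-C)=-(\alpha+\beta)\,A\wedge C=-A\wedge C$ shows in one line why the non-constant coefficients disappear, leaving only the four sign cases for the off-$k$ block. This buys conceptual clarity: in the paper the integrality of $\mu_k^*\omega$ looks like a coincidence verified case by case, whereas your argument traces it to $\alpha+\beta=1$ together with the fact that the same data $\{b_{kj}\}$ produce both the exchange monomials and the $k$-row of $\omega$. The price is that your final matching step, $-\,d\log P_+\wedge d\log P_-=\tfrac12\sum_{i<j,\ i,j\neq k}\bigl(|b_{ik}|b_{kj}+b_{ik}|b_{kj}|\bigr)\,\eta_i\wedge\eta_j$, still requires the same kind of sign bookkeeping the paper does, only once instead of four times.

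One slip in a parenthetical remark: you assert that the coefficient of $\eta_i\wedge\eta_j$ on both sides of that matching identity vanishes ``exactly when $b_{ik}$ and $b_{kj}$ share a sign.'' It is the opposite: both sides are nonzero precisely when $b_{ik}b_{kj}>0$, equivalently when $b_{ki}$ and $b_{kj}$ have \emph{opposite} signs (so that one index feeds $A$ and the other feeds $C$), and both vanish otherwise. Since your stated method is the explicit four-case expansion and the identity itself is true, this does not damage the proof, but the remark should be corrected --- it is exactly the kind of index slip ($b_{ik}$ versus $b_{ki}$) that you yourself warn about in your final paragraph.
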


\begin{proof} Recall from \eqref{mut k1} that
$$\mu_k(u_1,u_2, \dots , u_N)= \left ( u_1, u_2, \ldots , u_{k-1}, \underbrace{\frac{A^+ + A^-}{u_k}}_{u'_k}, u_{k+1}, \ldots, u_N\right ),$$
with 
$$A^+=\prod_{l: b_{kl}>0}u_l^{b_{kl}}, \quad A^-=\prod_{l: b_{kl}<0}u_l^{-b_{kl}}.$$
The pullback of $\omega$ by $\mu_k$ is then given by
\begin{equation}
\label{calc}
\mu_k^* \omega  =  \sum_{ k\neq i<j\neq k} b_{ij} \frac{du_i}{u_i}\wedge \frac{du_j}{u_j} + \sum_{j>k} b_{kj} \frac{du'_k}{u'_k}\wedge \frac{du_j}{u_j} +  \sum_{i<k} b_{ik} \frac{du_i}{u_i}\wedge \frac{du'_k}{u'_k}.
\end{equation}
As
$$\frac{du'_k}{u'_k}= - \frac{du_k}{u_k} + \frac{A^+}{A^++A^-}\sum_{l: b_{kl}>0} b_{kl}\frac{du_l}{u_l} - \frac{A^-}{A^++A^-}\sum_{l: b_{kl}<0} b_{kl}\frac{du_l}{u_l},$$
substituting into \eqref{calc} and re-arranging all the terms, we  obtain
\begin{equation}
\label{pullb}
\mu_k^* \omega = \sum_{i<j} \hat{b}_{ij} \frac{du_i}{u_i}\wedge \frac{du_j}{u_j}
\end{equation}
with $\hat{B}=[\hat{b}_{ij}]$ given by: 
\begin{itemize}
\item[a)]  for $i<k$ (resp. for $j>k$): $\hat{b}_{ik} = -b_{ik}$ (resp. $\hat{b}_{kj} = -b_{kj}$);
\item[b)] for $i<j<k$:
$$\hat{b}_{ij} = b_{ij} + \left \{ 
\begin{array}{ll} 
\frac{A^+\left ( b_{ik}b_{kj} - b_{jk}b_{ki} \right ) }{A^++A^-} = 0, & \mbox{ if }\, b_{ki}>0, \, b_{kj}>0 \\
& \\
- \frac{A^-\left ( b_{ik}b_{kj} - b_{jk}b_{ki} \right ) }{A^++A^-} = 0, & \mbox{ if }\, b_{ki}<0, \, b_{kj}<0 \\
 & \\
  \frac{A^+ b_{ik}b_{kj} +A^- b_{jk}b_{ki} }{A^++A^-} = b_{ik} b_{jk}, & \mbox{ if }\,  b_{ki}<0, \, b_{kj}>0 \\
 & \\
  - \frac{A^- b_{ik}b_{kj} +A^+ b_{jk}b_{ki} }{A^++A^-} = - b_{ik} b_{jk}, & \mbox{ if } \, b_{ki}>0, \, b_{kj}<0
\end{array}
\right. $$
where the equalities inside the above bracket follow from the skew-symmetry of $B=[b_{ij}]$.

Using similar arguments, the remaining entries $\hat{b}_{ij}$ are as follows.
\item[c)]  for $i<k<j$: 
$$\hat{b}_{ij} = b_{ij} + \left \{ 
\begin{array}{ll} 
\frac{A^+\left ( b_{ik}b_{kj} + b_{kj}b_{ki} \right ) }{A^++A^-} = 0 , & \mbox{ if }\, b_{ki}>0, \, b_{kj}>0 \\
& \\
- \frac{A^-\left ( b_{ik}b_{kj} + b_{kj}b_{ki} \right ) }{A^++A^-}= 0 , & \mbox{ if }\, b_{ki}<0, \, b_{kj}<0 \\
 & \\
  \frac{A^+ b_{ik}b_{kj} - A^- b_{kj}b_{ki} }{A^++A^-} = b_{ik}b_{kj}, & \mbox{ if }\,  b_{ki}<0, \, b_{kj}>0 \\
 & \\
 - \frac{A^- b_{ik}b_{kj} - A^+ b_{kj}b_{ki} }{A^++A^-}=  - b_{ik}b_{kj}, & \mbox{ if } \, b_{ki}>0, \, b_{kj}<0
\end{array}
\right.  $$
\item[d)] for $k<i<j$: 
$$\hat{b}_{ij} = b_{ij} + \left \{ 
\begin{array}{ll} 
\frac{A^+\left ( - b_{ki}b_{kj} + b_{kj}b_{ki} \right ) }{A^++A^-}= 0, & \mbox{ if }\, b_{ki}>0, \, b_{kj}>0 \\
& \\
\frac{A^-\left ( b_{ki}b_{kj} - b_{kj}b_{ki} \right ) }{A^++A^-}=0, & \mbox{ if }\, b_{ki}<0, \, b_{kj}<0 \\
 & \\
 - \frac{A^+ b_{ki}b_{kj} + A^- b_{kj}b_{ki} }{A^++A^-}= b_{ik}b_{kj}, & \mbox{ if }\,  b_{ki}<0, \, b_{kj}>0 \\
 & \\
  \frac{A^- b_{ki}b_{kj} + A^+ b_{kj}b_{ki} }{A^++A^-}=  - b_{ik}b_{kj}, & \mbox{ if } \, b_{ki}>0, \, b_{kj}<0
\end{array}
\right. $$
\end{itemize}
Summing up, if $i=k$ or $j=k$ then $\hat{b}_{ij} = -b_{ij}$, and in any other case
\begin{equation}
\hat{b}_{ij} = b_{ij} + \left \{ 
\begin{array}{ll} 
0, & \mbox{ if }\, b_{ik} b_{kj}<0 \\
 & \\
 b_{ik}b_{kj}, & \mbox{ if }\,  b_{ik}>0, \, b_{kj}>0 \\
 & \\
 -b_{ik}b_{kj}, & \mbox{ if } \, b_{ik}<0, \, b_{kj}<0
\end{array}
\right. \end{equation}

 It is now easy to check that each  $\hat{b}_{ij}$ coincides with $b'_{ij}$ given in \eqref{mut k2}, which shows that $\hat{B}=[\hat{b}_{ij}]$ is precisely $\mu_k(B)$.
\end{proof}

\begin{rem}
Although we do not require compatibility of $\omega$ with the cluster algebra ${\cal A}(B)$, the proof of the above lemma could follow from the proof of Theorem~6.2 in \cite{GeShVa3} which characterizes log presymplectic structures compatible with a cluster algebra. However, to the best of our knowledge the statement in Lema~\ref{lem2}, in particular the expression \eqref{pullmuk},  is new and not completely obvious from the  literature in cluster algebras.
\end{rem}

\bigbreak

\begin{proof}[Proof of Theorem~\ref{presomega}]
Using properties of the pullback operation and the identities \eqref{pullsigma} and \eqref{pullmuk} from Lemmas \ref{lem1} and \ref{lem2},  it follows that
\begin{eqnarray*}
\varphi^* \omega & = & \left ( \sigma^m \circ \mu_m \circ \mu_{m-1}\circ \ldots \circ \mu_1\right )^* \omega =  \mu_1^* \circ \ldots \circ \mu_{m}^*\circ (\sigma^m)^*\omega\\
 & = &   \sum_{i<j}  \left ( \mu_1 (\cdots \mu_{m-1} (\mu_m(\sigma^{-m} B \sigma^{m})))\right )_{ij} \frac{du_i}{u_i}\wedge \frac{du_j}{u_j}.
 \end{eqnarray*}
Therefore $\varphi^*\omega=\omega$ if and only if
$$\mu_1 \circ\cdots\circ \mu_{m-1}\circ \mu_m (\sigma^{-m} B \sigma^{m}) = B.$$

\noindent As any mutation is an involution we have the following equivalence
$$\varphi^*\omega=\omega \quad \Longleftrightarrow \quad \sigma^{-m} B \sigma^{m}= \mu_m\circ\cdots\circ\mu_1 (B),$$
which concludes the proof.
\end{proof}

\section{Symplectic reduction of cluster iteration maps}\label{sec4}

In this section we prove that whenever the matrix $B$ representing an $m$-periodic quiver  is singular, the corresponding iteration map can be reduced to a symplectic map with respect to a {\it log symplectic form}.  

Our main result, Theorem \ref{reducth}, is a generalization for quivers of arbitrary period $m$ of Theorem 2.6 in \cite{FoHo2}. The strategy we follow to prove  this theorem is completely   different  from the one  in \cite{FoHo2}, which  was based on the classification of 1-periodic quivers. In fact our proof relies on a classical theorem of G. Darboux for presymplectic forms and on the invariance of the standard presymplectic form under the iteration map proved in Theorem~\ref{presomega}.

\begin{theorem}
\label{reducth}
Let $Q$ be an $m$-periodic quiver with $N$ nodes, $(B_Q,\mathbf{u})$ the associated initial seed and $\varphi$ the iteration map given in \eqref{itmapm}. If $\rk(B_Q)=2k<N$ then there exist 
\begin{enumerate}
\item[i)] a submersion $\pi:  \Rb_+ ^N \longrightarrow   \Rb_+^{2k}$,
\item[ii)] a map $\hat{\varphi}: \Rb_+ ^{2k} \longrightarrow  \Rb_+ ^{2k}$
\end{enumerate}
such that 
the following diagram is commutative
$$\xymatrix{ \Rb_+ ^N  \ar @{->} [r]^\varphi\ar @{->} [d]_{\pi} & \Rb_+ ^N \ar @{->}[d]^{\pi}\\
 \Rb_+ ^{2k}  \ar @{->} [r]_{\hat{\varphi}} &  \Rb_+ ^{2k}}$$
Furthermore $\hat{\varphi}$ is symplectic with respect to  the \emph{canonical log symplectic form}
\begin{equation}
\label{omega0}
\omega_0=\frac{dy_1}{y_1}\wedge \frac{dy_2}{y_2} + \cdots + \frac{dy_{2k-1}}{y_{2k-1}}\wedge \frac{dy_{2k}}{y_{2k}},
\end{equation}
that is $\hat{\varphi}^* \omega_0=\omega_0$.
\end{theorem}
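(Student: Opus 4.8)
The plan is to combine the invariance result already proved in Theorem~\ref{presomega} with a reduction of the presymplectic form \eqref{ssf} to its symplectic quotient. Since $B=B_Q$ represents an $m$-periodic quiver, Theorem~\ref{presomega} gives $\varphi^*\omega=\omega$ for the standard log presymplectic form $\omega$. The strategy is to change to log coordinates $v_i=\log u_i$, turning $\Rb_+^N$ into $\Rb^N$ and $\omega$ into the \emph{constant-coefficient} 2-form $\tilde\omega=\sum_{i<j}b_{ij}\,dv_i\wedge dv_j$. In these coordinates $\varphi$ becomes a diffeomorphism $\tilde\varphi$ of $\Rb^N$ (rational in the original variables, but a genuine diffeomorphism on the positive orthant) satisfying $\tilde\varphi^*\tilde\omega=\tilde\omega$. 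Now $\tilde\omega$ is a closed 2-form of constant rank $2k<N$, so its kernel distribution $\ker\tilde\omega$ is an integrable, constant-coefficient distribution of rank $N-2k$; this is precisely the setting of E.~Cartan's linear version of the Darboux theorem mentioned in the introduction.

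First I would invoke Cartan's theorem to produce a linear change of coordinates $w=Lv$ (with $L\in GL_N(\Rb)$, constant) bringing $\tilde\omega$ into the canonical form $dw_1\wedge dw_2+\cdots+dw_{2k-1}\wedge dw_{2k}$, so that the last $N-2k$ coordinates $w_{2k+1},\ldots,w_N$ span $\ker\tilde\omega$. Define $\pi$ in log coordinates as the linear projection $v\mapsto(w_1,\ldots,w_{2k})$ followed by exponentiation, i.e.\ $y_i=\exp\big((Lv)_i\big)=\prod_j u_j^{L_{ij}}$ for $1\le i\le 2k$; this is a submersion $\Rb_+^N\to\Rb_+^{2k}$ and realizes requirement~(i). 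The second ingredient is to check that the kernel distribution of $\omega$ is \emph{invariant} under $\varphi$: because $\varphi^*\omega=\omega$, the pushforward $\varphi_*$ preserves $\ker\omega$, hence $\varphi$ descends along the leaves of the kernel foliation. Concretely, the fibers of $\pi$ are exactly the integral leaves of $\ker\omega$ (the level sets where $w_1,\ldots,w_{2k}$ are fixed), so $\varphi$ maps fibers of $\pi$ to fibers of $\pi$, which is precisely what guarantees that a well-defined reduced map $\hat\varphi$ exists making the diagram commute.

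With $\pi$ and the fiber-preservation in hand, I would define $\hat\varphi$ by the quotient construction: for $y\in\Rb_+^{2k}$ pick any $u\in\pi^{-1}(y)$ and set $\hat\varphi(y)=\pi(\varphi(u))$; fiber-preservation makes this independent of the chosen $u$, giving requirement~(ii) and commutativity $\pi\circ\varphi=\hat\varphi\circ\pi$. Finally, to show $\hat\varphi^*\omega_0=\omega_0$, I would note that by construction $\pi^*\omega_0=\omega$ (the coordinates $y_i$ are exactly the log coordinates diagonalizing $\omega$). Then $\pi^*(\hat\varphi^*\omega_0)=(\hat\varphi\circ\pi)^*\omega_0=(\pi\circ\varphi)^*\omega_0=\varphi^*(\pi^*\omega_0)=\varphi^*\omega=\omega=\pi^*\omega_0$, and since $\pi$ is a submersion its pullback $\pi^*$ is injective on 2-forms, yielding $\hat\varphi^*\omega_0=\omega_0$, so $\hat\varphi$ is symplectic.

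The main obstacle I anticipate is the well-definedness and surjectivity bookkeeping rather than any deep analytic difficulty: one must verify that the constant matrix $L$ from Cartan's theorem can be chosen so that $\pi$ maps $\Rb_+^N$ onto $\Rb_+^{2k}$ and that $\hat\varphi$ genuinely maps $\Rb_+^{2k}$ into itself (since $\varphi$ only preserves the positive orthant, not all of $\Rb^N$). A secondary subtlety is confirming that the kernel foliation of $\omega$ is \emph{integrated} by the global coordinates $y_i$ on the positive orthant, i.e.\ that the linearity of the problem in log coordinates makes the leaves of $\ker\omega$ exactly the level sets of $\pi$; this is where the passage to $v=\log u$ and the constant-coefficient structure of $\tilde\omega$ does the essential work, reducing what could be a local Darboux argument to a single global linear-algebra normalization.
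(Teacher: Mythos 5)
Your proof is correct, and although it follows the paper's overall skeleton (invariance from Theorem~\ref{presomega}, a submersion $\pi$ satisfying $\pi^*\omega_0=\omega$ built from Darboux-type coordinates, and the final symplecticity argument via injectivity of $\pi^*$ on forms for submersions, which is essentially identical to the paper's closing step), your key descent step uses a genuinely different argument. The paper proves, in Proposition~\ref{reduc}, that each $g_i\circ\varphi$ depends only on $g_1,\ldots,g_{2k}$ by an exterior-algebra computation: $\varphi^*\omega=\omega$ forces $\varphi^*\omega^{(k)}=\omega^{(k)}$, i.e.\ $d\psi_1\wedge\cdots\wedge d\psi_{2k}=dg_1\wedge\cdots\wedge dg_{2k}$, after which a linear-algebra argument on the $2k\times N$ Jacobian (leading $2k\times 2k$ minor equal to $1$, all other $2k\times 2k$ minors equal to $0$) shows the dependence on the remaining coordinates vanishes. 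You instead run the classical presymplectic reduction argument: since $\varphi$ is a diffeomorphism of $\Rb_+^N$ with $\varphi^*\omega=\omega$, its differential preserves $\ker\omega$, so $\varphi$ maps leaves of the kernel foliation to leaves; and in your construction the fibers of $\pi$ are exactly these leaves (affine subspaces in the coordinates $v=\log u$), so $\varphi$ descends. Both routes are valid: yours is the more conceptual one and explains \emph{why} the reduction exists, while the paper's Jacobian computation directly exhibits each $g_i\circ\varphi$ as a function of the Darboux coordinates, which is what is exploited computationally in Section~\ref{sec5}. A second difference works in your favor: by passing to log coordinates and invoking Cartan's linear theorem from the start, you obtain globally defined linear Darboux coordinates $w=Lv$, so your $\pi$ is manifestly a global submersion of $\Rb_+^N$ onto $\Rb_+^{2k}$; the paper's proof quotes the (local) Darboux theorem and tacitly relies on this same global linear structure, which it only makes explicit later, in Section~\ref{sec5}. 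Finally, the bookkeeping concerns you raise are harmless for exactly the reason you indicate: linearity in $v$ makes surjectivity of $\pi$ and connectedness of its fibers automatic, $\hat\varphi(y)=\pi(\varphi(u))$ lands in $\Rb_+^{2k}$ because $\varphi$ preserves the positive orthant, and smoothness of $\hat\varphi$ follows as usual from local sections of the submersion $\pi$.
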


\bigbreak

The proof of this  theorem relies on the next proposition which in turn relies on Darboux's theorem for closed 2-forms (or presymplectic forms)  of constant rank. 

\begin{proposition}
\label{reduc}
Let $\omega$ be a closed 2-form of constant rank $2k$ on a manifold $M$ and  $x_0\in M$. Then, there exists a set $\{g_1,g_2, \ldots , g_{2k}\}$ of $2k$ locally independent functions on $M$ such that
\begin{equation}
\label{cart}
\omega = dg_1 \wedge dg_2 + \cdots + dg_{2k-1}\wedge dg_{2k}.
\end{equation}
Moreover, if $\varphi$ is a local diffeomorphism preserving $\omega$, that is  $\varphi^*\omega=\omega$, then each of the functions 
$$\psi_1= g_1\circ \varphi, \quad \psi_2 = g_2\circ \varphi, \quad \ldots  \quad , \psi_{2k}= g_{2k}\circ \varphi$$
depends only on $\{ g_1, g_2, \ldots , g_{2k}\}$.

\end{proposition}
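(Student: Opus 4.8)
The plan is to prove this in two logically separate pieces: first the local normal form \eqref{cart}, and then the functional dependence of the pushed-forward functions. For the first piece I would invoke Darboux's theorem for a closed $2$-form of constant rank $2k$. This is the classical statement that in a neighbourhood of $x_0$ there exist $2k$ functionally independent functions $g_1,\ldots,g_{2k}$ together with (possibly) some additional functions on which $\omega$ does not depend, such that $\omega$ takes the canonical form $dg_1\wedge dg_2+\cdots+dg_{2k-1}\wedge dg_{2k}$. Since the proposition only asserts the existence of the $g_i$ realizing \eqref{cart}, this first part is essentially a direct citation of Darboux's theorem (or of Cartan's linear version alluded to in the introduction), and I would not reprove it.

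The substance of the proposition is the second claim, so I would focus the argument there. The key observation is that the kernel distribution of $\omega$ is intrinsic, and that a diffeomorphism preserving $\omega$ must preserve this kernel. Concretely, I would first note that because $\omega$ has constant rank $2k$, its kernel $\ker\omega=\{X: \iota_X\omega=0\}$ is a distribution of constant rank $N-2k$ (writing $N=\dim M$), and by the Darboux normal form it is precisely the common kernel of the differentials $dg_1,\ldots,dg_{2k}$; that is, a vector $X$ lies in $\ker\omega$ if and only if $dg_i(X)=0$ for all $i$. This characterization is what ties the abstract kernel to the chosen coordinate functions.

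Next I would use the hypothesis $\varphi^*\omega=\omega$ to transport this. The functions in question are $\psi_i=g_i\circ\varphi$, and the claim that each $\psi_i$ depends only on $g_1,\ldots,g_{2k}$ is equivalent to the statement that $d\psi_i$ annihilates $\ker\omega$ for every $i$, i.e. $d\psi_i(X)=0$ whenever $X\in\ker\omega$. To see this, fix $X\in\ker\omega$ at a point $p$. Then
\begin{equation*}
d\psi_i(X)=d(g_i\circ\varphi)(X)=dg_i\bigl(\varphi_* X\bigr),
\end{equation*}
so it suffices to show that $\varphi_* X\in\ker\omega$ at $\varphi(p)$. This follows from invariance: for any $Y$ one computes $\omega(\varphi_*X,\varphi_*Y)=(\varphi^*\omega)(X,Y)=\omega(X,Y)=0$, and since $\varphi$ is a local diffeomorphism $\varphi_*$ is an isomorphism onto the tangent space at $\varphi(p)$, whence $\iota_{\varphi_*X}\omega=0$, i.e. $\varphi_*X\in\ker\omega$. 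By the kernel characterization above, $dg_i(\varphi_*X)=0$, hence $d\psi_i(X)=0$.

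Finally I would convert "each $\psi_i$ annihilates $\ker\omega$" into "each $\psi_i$ is a function of $g_1,\ldots,g_{2k}$". Since $g_1,\ldots,g_{2k}$ are independent and $\ker\omega$ is exactly their common kernel, the $g_i$ can be completed locally to a coordinate system in which the remaining coordinates span $\ker\omega$; a function whose differential kills $\ker\omega$ has no dependence on those remaining coordinates, so it is locally a function of $g_1,\ldots,g_{2k}$ alone. I expect the only genuine subtlety to be this last step — guaranteeing that the $g_i$ form part of a coordinate chart so that "independent of the kernel directions" legitimately means "a function of the $g_i$" — which is handled by the constant-rank hypothesis and the rank theorem. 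The rest is a short, clean computation using naturality of the pullback.
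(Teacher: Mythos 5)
Your proof is correct, but your argument for the second (substantive) claim is genuinely different from the paper's. The paper works with the $k$-th exterior power: from $\varphi^*\omega=\omega$ it deduces $\varphi^*\omega^{(k)}=\omega^{(k)}$, rewrites this as $d\psi_1\wedge\cdots\wedge d\psi_{2k}=dg_1\wedge\cdots\wedge dg_{2k}$, completes the $g_i$ to coordinates, and then argues via the $2k\times 2k$ minors of the Jacobian of $\psi=(\psi_1,\ldots,\psi_{2k})$: the leftmost minor equals $1$, all others vanish, and linear algebra forces the rightmost $2k\times(N-2k)$ block to be zero. You instead identify the intrinsic invariant object, the kernel distribution $\ker\omega$, show it is the common kernel of $dg_1,\ldots,dg_{2k}$ via the Darboux normal form, show $\varphi_*$ preserves it by naturality of the pullback, and conclude that each $d\psi_i$ annihilates $\ker\omega$, hence $\psi_i$ is locally a function of the $g_i$ alone. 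Your route is more conceptual and avoids the determinant bookkeeping, making clear \emph{why} the statement holds (the $g_i$ cut out the symplectic leaves of the foliation tangent to $\ker\omega$, and $\varphi$ maps leaves to leaves); the paper's route is more computational but yields as a byproduct the volume identity $d\psi_1\wedge\cdots\wedge d\psi_{2k}=dg_1\wedge\cdots\wedge dg_{2k}$, i.e.\ unit Jacobian of the induced map in Darboux coordinates, which foreshadows the symplecticity of the reduced map in Theorem~\ref{reducth}. The only points to state carefully in your version (both are fine under the constant-rank hypothesis) are that the kernel characterization must hold near $\varphi(p)$, i.e.\ one works on the open set where both $g_i$ and $g_i\circ\varphi$ are defined, and that in any completion $(g_1,\ldots,g_{2k},h_1,\ldots,h_{N-2k})$ of the $g_i$ to a chart the coordinate fields $\partial/\partial h_j$ automatically span $\ker\omega$ by a dimension count, so "kills $\ker\omega$" indeed means "independent of the $h_j$" on a connected neighbourhood.
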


\begin{proof} The first statement is just Darboux's theorem for closed 2-forms of constant rank (see for instance \cite{AbMa} and \cite{SS}).\

To prove that the functions $\,\, \psi_i= g_i\circ \varphi\,\,$ just depend on the set $\{ g_1, g_2, \ldots , g_{2k}\}$, let $\omega^{(k)}$ be the $k^{th}$ exterior power of $\omega$. As $\varphi^*\omega = \omega$ then  
\begin{equation}
\label{inv}
\varphi^*\omega^{(k)} = \omega^{(k)}.
\end{equation}
Using the expression \eqref{cart} for $\omega$, it turns out that
$$\omega^{(k)} =(-1)^{[k/2]}k! \, dg_1\wedge dg_2 \wedge \ldots \wedge dg_{2k-1}\wedge dg_{2k},$$
and so  \eqref{inv} is equivalent to
\begin{equation}
\label{dfi}
d\psi_1\wedge d\psi_2 \wedge \ldots \wedge d\psi_{2k-1}\wedge d\psi_{2k} = dg_1\wedge dg_2 \wedge \ldots \wedge dg_{2k-1}\wedge dg_{2k}.
\end{equation}
Now complete the set $\{ g_1, g_2, \ldots ,g_{2k}\}$ to a full set of coordinates $\{ g_1, g_2, \ldots , g_N\}$ on $M$, and consider the Jacobian matrix of $\psi = ( \psi_1, \psi_2, \ldots , \psi_{2k})$,
\begin{equation}
\nonumber
J = \begin{bmatrix}
\pder{\psi_1}{g_1} &  \cdots & \pder{\psi_1}{g_{2k}}  & \pder{\psi_1}{g_{2k+1}} &  \cdots & \pder{\psi_1}{g_N}  \\
\\
\vdots &  & \vdots & \vdots & & \vdots \\
\\
\pder{\psi_{2k}}{g_1} &  \cdots & \pder{\psi_{2k}}{g_{2k}}  & \pder{\psi_{2k}}{g_{2k+1}} &  \cdots & \pder{\psi_{2k}}{g_N}  
\end{bmatrix}.
\end{equation}
Condition \eqref{dfi} implies that the determinant of the leftmost $(2k)\times(2k)$ submatrix of $J$ is equal to 1 and all the other $(2k)\times(2k)$ determinants of $J$ are equal to 0.

Linear algebra arguments assure that the rightmost $(2k)\times(N-2k)$ submatrix of $J$ is the zero matrix, which concludes the proof.
\end{proof}
\bigbreak

\begin{proof}[Proof of Theorem \ref{reducth}] Let $\omega$ be the standard log presymplectic form on $\Rb_+^N$
$$\omega = \sum_{1\leq i<j\leq N} b_{ij} \frac{du_i}{u_i} \wedge \frac{du_j}{u_j}.
$$
Since $\omega$ has rank $2k$, Darboux's theorem (see Proposition~\ref{reduc}) implies the existence of functions $g_1, \ldots, g_{2k}$ such that $\omega$ is given by \eqref{cart}. Considering
$$\begin{array}{rcl}
\pi :  \Rb_+ ^N & \longrightarrow & \Rb_+ ^{2k}\\
\mathbf{u} & \longmapsto & \left ( \exp(g_1(\mathbf{u})), \ldots, \exp(g_{2k}(\mathbf{u}))\right),
\end{array}$$ 
and $\omega_0$ given by \eqref{omega0}, we have 
\begin{equation}
\label{piomega}
\pi^*\omega_0 = \omega.
\end{equation}

By Theorem \ref{presomega} the iteration map $\varphi$ preserves $\omega$, that is  $\varphi^*\omega = \omega$.  Then,  again by Proposition~\ref{reduc},
$\pi\circ\varphi (\mathbf{u})$ depends only on $\pi(\mathbf{u})$, since each $\,\,g_i\circ \varphi\,\,$ depends only on $\{ g_1, \ldots, g_{2k}\}$.
This is equivalent to say that  $\hat{\varphi}$ exists and makes the diagram commutative.

It remains to prove that $\hat{\varphi}$ is symplectic. For this purpose, we note that the  commutativity of the diagram is equivalent to
$$( \pi\circ \varphi)^*\omega_0 = (  \hat{\varphi}\circ \pi)^*\omega_0 \quad \Longleftrightarrow \quad \varphi^*(\pi^*\omega_0)= \pi^* ( \hat{\varphi}^*\omega_0).$$
Using \eqref{piomega} and the fact that  $\varphi$ preserves $\omega$,  we have
$$\varphi^*(\pi^*\omega_0)= \pi^* ( \hat{\varphi}^*\omega_0) \quad \Longleftrightarrow \quad \pi^*\left (\omega_0  -  \hat{\varphi}^* \omega_0  \right ) = 0.$$
As $\pi$ is a submersion, this implies $\hat{\varphi}^* \omega_0 = \omega_0$. 
\end{proof}

\begin{rem}
\label{rem2}
In the proof of Theorem \ref{reducth}, we can use any (nonzero) multiple of $\omega$. In fact, if $B$ comes from an $m$-periodic quiver then $\lambda\omega$ is also preserved by $\varphi$ (direct consequence of Theorem \ref{presomega}), Proposition 1 can still be used with $\lambda\omega$, and reduction is achieved in an entirely analogous way. 
 This fact will be used in Example \ref{ex1} to improve the form of the reduced iteration map. 
\end{rem}

We note that without the assumption of periodicity of the quiver $Q$ (that is, without  the iteration map $\varphi$), Theorem~\ref{reducth}  may be seen as an alternative proof of the existence of the secondary cluster manifold with its Weil-Petersson form (see for instance \cite{GeShVa3}). However, when taking into account the iteration map,  Theorem~\ref{reducth} has stronger consequences since it allows to reduce recurrence relations to the secondary manifold.

\section{Computation of the reduced iteration map}\label{sec5}

In this section we compute the reduced iteration map $\hat{\varphi}$ as well as the reduced variables $\pi(\mathbf{u})$ 
for three periodic quivers. To the best of our knowledge this reduction has not been performed before for quivers of period higher than 1.

Our approach to the computation of the reduced symplectic iteration map $\hat{\varphi}$ is based on the construction of the functions $g_1, \ldots, g_{2k}$ appearing in Proposition~\ref{reduc}. We note that these functions are given by Darboux's theorem  whose proof is not constructive. However due to the particular form of the log presymplectic forms it is possible to change coordinates in such a way that one can apply a theorem of Cartan (Theorem~2.3  in  \cite{LiMa}) whose proof is constructive,  and then obtain explicit reduced variables $f_i=\exp(g_i(\mathbf{u}))$.

Let us consider the  standard log presymplectic form $\omega$ written in coordinates $v_i = \log u_i$,
$$\omega =  \sum_{1\leq i<j\leq N} b_{ij} dv_i \wedge dv_j,$$
and assume that its rank is $2k<N$.

Cartan's Theorem says that there exist $2k$ functions $g_i$ (depending  linearly on the $v_i$ variables)  such that  
$$\omega =  \sum_{1\leq i<j\leq 2k} dg_i \wedge dg_j. $$
We recall the main steps of the proof of Cartan's Theorem, which explicitly produces the functions $g_i$.
\bigbreak

\noindent {\it 
Reordering if necessary the $v_i$-coordinates, we can assume that $b_{12}\neq 0$. Let
\begin{equation}
\label{cartg}
g_1= \frac{1}{b_{12}} \sum_{k=1}^N b_{1k} v_k \quad \mbox{and} \quad g_2=\sum_{k=1}^N b_{2k} v_k,
\end{equation}
so that 
$$dg_1\wedge dg_2 = b_{12} dv_1\wedge dv_2 + \sum_{i=3}^N b_{1i} dv_1\wedge dv_i +  \sum_{j=3}^N b_{2j} dv_2\wedge dv_j+ \alpha$$
where $\alpha$ depends only on $\{ v_3, \ldots , v_N\}$. Then the 2-form
$$\tilde{\omega}=\omega - dg_1\wedge dg_2$$
is a closed 2-form on the $(N-2)$-dimensional vector space, with coordinates $\{ v_3, \ldots , v_N\}$, and $\rk (\tilde{\omega})=2k-2$.

If $\rk (\omega)=2$ then $\tilde{\omega}= 0$ and $\omega = dg_1\wedge dg_2.$ Otherwise, the previous procedure is repeated, replacing $\omega$ by $\tilde{\omega}$. After $k$ steps all the functions $g_1, \ldots, g_{2k}$ will have been obtained.}

\bigbreak
As each function $g_i$ is a linear function of the  variables $v_i=\log u_i$,  it can be written in the form
$$g_i (u_1,\ldots ,u_N) = \log ( f_i(u_1,\ldots, u_N)), \quad i=1, 2, \ldots N.$$
The submersion $\pi$ in Theorem \ref{reducth} is then given by 
$$\pi (u_1,\ldots ,u_N) = \left ( f_1(u),\ldots ,f_{2k}(u)\right ),$$
and the functions $f_1, \ldots, f_{2k}$ are reduced variables. Again by Theorem~\ref{reducth},  $\hat{\varphi}\circ\pi = \pi\circ\varphi$ and so the reduced iteration map is
$$\hat{\varphi} (f_1,\ldots ,f_{2k}) = ( f_1 \circ \varphi , \ldots, f_{2k} \circ \varphi ).$$

\bigbreak

In the next three examples we  consider  quivers represented by the matrix $B$ in \eqref{matrix62per} and we compute the respective reduced iteration map. In the first two examples the quivers are  2-periodic and the computation of the reduced iteration map follows step by step  the described proof of Cartan's theorem. The last example is a 1-periodic quiver  and aims to  illustrate the possibility of modifying  Cartan's construction to choose a more convenient set of reduced variables.

Recall that $B$ in \eqref{matrix62per} represents a 1-periodic quiver if $r=t$ and a 2-periodic quiver otherwise. Furthermore, if $p=r+t$ then $B$ fails to have maximal rank and so the iteration map $\varphi$ can be reduced to a symplectic map (by Theorem~\ref{reducth}).

\begin{example} 
\label{ex2}
 {\it A 2-periodic quiver $Q$ with 6 nodes and $\operatorname{rank} B_Q=2$.}  
\bigbreak 
 
When  $r=2, t=5,p=7,s=13$ the matrix $B$ in   \eqref{matrix62per}  has $\operatorname{rank}$ 2.
 From \eqref{itmap4nos2pernova2} the iteration map is
$$\varphi(u_1,u_2,u_3,u_4, u_5,u_6)= (u_3,u_4,u_5,u_6,u_7,u_8)
$$
with
\begin{equation}\label{u7u8rank2}
u_7=\frac{u_3^{13}u_5^{13}+u_2^2u_4^7u_6^5}{u_1}, \qquad u_8=\frac{u_4^{13}u_6^{13}+u_3^{5}u_5^{7}u_7^2}{u_2}.
\end{equation}
The first and second rows of $B$ are respectively  
$$(0,-2,13,-7,13,-5) \quad \mbox{and}  \quad (2,0,-31,13,-33,13),$$
and so \eqref{cartg} gives 
 $\omega= dg_1\wedge dg_2,$
with
$$g_1=v_2-\frac{13}{2}v_3+\frac{7}{2}v_4-\frac{13}{2}v_5+\frac{5}{2}v_6 =\log \left ( \frac{u_2u_4^{7/2}u_6^{5/2}}{u_3^{13/2}u_5^{13/2}}\right )$$
and 
$$g_2=2v_1-31v_3+13v_4-33v_5+13v_6=\log \left ( \frac{u_1^2u_4^{13}u_6^{13}}{u_3^{31}u_5^{33}}\right ).$$
The reduced log symplectic form is $\hat{\omega}= \frac{df_1}{f_1}\wedge \frac{df_2}{f_2}$,
with
$$f_1=\frac{u_2u_4^{7/2}u_6^{5/2}}{u_3^{13/2}u_5^{13/2}}, \qquad f_2=\frac{u_1^2u_4^{13}u_6^{13}}{u_3^{31}u_5^{33}}.$$
To obtain the reduced  iteration map $\hat{\varphi}= (\hat{\varphi}_1, \hat{\varphi}_2)$ we just have to compute  the compositions 
$f_i\circ \varphi$ as functions of $f_1,f_2$, which produces
\begin{align*}
\hat{\varphi}_1=f_1\circ \varphi& = \frac{f_2^{3/4}\left(f_2+ (1+f_1^2)^2\right)^{5/2}}{f_1^{5/2} (1+f_1^2)^{13/2} },\\
\hat{\varphi}_2=f_2 \circ\varphi& =\frac{f_2^{7/2} \left(f_2+ (1+f_1^2)^2\right)^{13}}{f_1^{13} (1+f_1^2)^{33}}.
\end{align*}
\end{example}

\begin{example}
\label{ex21}
{\it A 2-periodic quiver $Q$ with 6 nodes and $\operatorname{rank} B_Q=4$.}
\bigbreak

Taking $r=1, s=1, t=2,p=3$ in  \eqref{matrix62per}, we have   $\operatorname{rank} B= 4$. In this case,  one needs to go one step further in the proof of Cartan's Theorem in order to compute the reduced functions $f_i$. 

The iteration map is
$\varphi(u_1,u_2,u_3,u_4, u_5,u_6)= (u_3,u_4,u_5,u_6,u_7,u_8)
$
with
\begin{equation}\label{u7u8rank22}
u_7=\frac{u_3u_5+u_2u_4^3u_6^2}{u_1}, \qquad u_8=\frac{u_4u_6+u_3^2u_5^3u_7}{u_2}.
\end{equation}

Using  Cartan's construction we get  $\omega= dg_1\wedge dg_2+ dg_3\wedge dg_4$ with
$$dg_1\wedge dg_2 = d\left(v_2-v_3+3v_4-v_5+2 v_6\right)\wedge d\left(v_1-3v_3+v_4-4v_5+v_6\right),$$
and 
$$dg_3\wedge dg_4 = d(v_4+v_6)\wedge d(8v_3+8v_5).$$
From the above expressions,  we obtain 
\begin{equation}\label{invarank22}
f_1=\frac{u_2u_4^3u_6^2}{u_3u_5},\quad f_2=\frac{u_1u_4u_6}{u_3^3u_5^4},\quad
f_3= u_4 u_6,\quad f_4=u_3^8u_5^8.
\end{equation}
The reduced iteration map is $\hat{\varphi}= (\hat{\varphi}_1,\hat{\varphi}_2,\hat{\varphi}_3,\hat{\varphi}_4)$ with
\begin{align*}
\hat{\varphi}_1=f_1\circ \varphi&=\frac{f_3^8 (1+f_1+f_2)^2}{f_1^2f_2 (1+f_1)}\\
\hat{\varphi}_2=f_2\circ\varphi&=\frac{f_2^3f_4 (1+f_1+f_2)}{f_1 (1+f_1)^4}\\
\hat{\varphi}_3=f_3\circ\varphi&=\frac{f_3^4(1+f_1+f_2)}{f_1f_2f_4^{1/8}}\\
\hat{\varphi}_4=f_4\circ\varphi&=\frac{f_3^8 (1+f_1)^8}{f_2^{8} f_4^{2}}.
\end{align*}
\end{example}

\begin{example} {\it A 1-periodic quiver with 6 nodes and $\operatorname{rank} B_Q=2$.}
\label{ex1}
\bigbreak

Let us consider the matrix $B$ in \eqref{matrix62per} with $r=t=2$, $p=4$ and $s=6$, which represents a 1-periodic quiver. The rank of $B$ is 2 and its first and second rows are, respectively, $(0,-2,6,-4,6,-2)$ and $(2,0,-14,6,-16,6)$. 

The iteration map is  
$\varphi(u_1,u_2,u_3,u_4, u_5,u_6)= (u_2,u_3,u_4,u_5,u_6,u_7)
$
with
\begin{equation}\label{1per6nodesred}
u_7=\frac{u_2^2u_4^4u_6^2+u_3^6u_5^6}{u_1}.
\end{equation}

The reduction of this iteration map was done in \cite{InEs} and also in \cite{FoHo} (using a different approach). We include it here in order to illustrate the possibility of using Cartan's proof  to obtain a more convenient set of reduced variables. The strategy of the previous examples is modified as follows. 

We start by considering the  presymplectic form $\omega' = -\frac{1}{2} \omega$ (see Remark~\ref{rem2}). That is,
$$\omega'= d\underbrace{\left(v_2-3v_3+ 2v_4-3v_5+v_6\right)}_{g_1'}\wedge d\underbrace{\left(-v_1+7v_3-3v_4+8v_5-3v_6\right)}_{g_2'}.
$$
Next, noting that $\omega'= d g_1'\wedge d g_2' = d g_1'\wedge d (g_2' +3 g_1')$, and taking $g_1= -g_2'-3g_1'$ and $g_2=g_1'$ we obtain
$$\omega'= dg_1\wedge dg_2= d\left(v_1-3v_2+2v_3-3v_4+v_5\right)\wedge d\left(v_2-3v_3+ 2v_4-3v_5+v_6\right).$$

The reduced variables are then 
$$f_1=\frac{u_1u_3^2u_5}{u_2^3u_4^3}, \quad f_2=\frac{u_2u_4^2u_6}{u_3^3u_5^3},$$
and the reduced  map $\hat{\varphi}$, which is  symplectic with respect to the form
$\hat{\omega} =  \frac{df_1}{f_1}\wedge\frac{df_2}{f_2},$
is given by 
$$\hat{\varphi} (f_1,f_2)=\left(f_2, \frac{1+f_2^2}{f_1f_2^3}\right).$$
\end{example}

 Therefore the sixth order recurrence relation \eqref{rec6nodes1p}
$$
u_{n+6} u_n= u_{n+1}^2u_{n+3}^2u_{n+5}^4+u_{n+2}^6u_{n+4}^6, \quad n=1,2,\ldots
$$
reduces to the following relation
$$f_{n+2} = \frac{1+f_{n+1}^2}{f_nf_{n+1}^3}, \quad n=1,2,\ldots $$
which is a recurrence relation of order 2.

\small{}

\end{document}